\def \inf{{\rm inf}}
\def \dim{{\rm dim \,}}
\def \dm{{\rm diam \,}}
\newcommand \N{\mathbb{N}}
\newcommand \F{\mathcal{F}}
\newcommand \St{{\rm St}}
\newcommand \ef{\mathbf{\Gamma}}
\newcommand \dih{\dim_{H}}
\newcommand \uno{\dim_{\ef}^{1}}
\newcommand \dos{\dim_{\ef}^{2}}
\newcommand \tres{\dim_{\ef}^{3}}
\newcommand \cuatro{\dim_{\ef}^{4}}
\newcommand \cinco{\dim_{\ef}^{5}}
\newcommand \seis{\dim_{\ef}^{6}}
\newcommand \h{\dim_{H}}
\newcommand \bc{\dim_{B}}
\newtheorem{prop}{Proposition}[section]
\newtheorem{teo}[prop]{Theorem}
\newtheorem{cor}[prop]{Corollary}
\newtheorem{defn}[prop]{Definition}
\newtheorem{obs}[prop]{Remark}
\theoremstyle{definition}
\newtheorem{ejem}{Example}
\theoremstyle{remark}
\newtheoremstyle{named}{}{}{\itshape}{}{\bfseries}{.}{.5em}{\thmnote{#3's }#1}
\theoremstyle{named}
\newtheorem*{namedtheorem}{Theorem}
\begin{document}
\title[Generalizing Moran's Theorem]{Generalizing Moran's Theorem}
\author[Fern\'andez-Mart\'{\i}nez, Guirao, S\'anchez-Granero]{M. Fern\'andez-Mart\'{\i}nez, Juan L.G. Guirao and M.A. S\'anchez-Granero}

\address{$^{1}$ Centro Universitario de la Defensa. Academia General del Aire.
Universidad Polit\'{e}cnica de Cartagena, 30720-Santiago de la
Ribera, Regi\'{o}n de Murcia, Spain}
\email{fmm124@gmail.com}

\address{$^{2}$ Departamento de Matem\'{a}tica Aplicada y Estad\'{\i}stica. Universidad
Polit\'{e}cnica de Cartagena, Hospital de Marina, 30203--Cartagena,
Regi\'{o}n de Murcia, Spain. --Corresponding Author--} \email{juan.garcia@upct.es}

\address{$^{3}$ Departamento de Matem\'{a}ticas. Universidad de Almer\'{\i}a. C/ Sacremento, S/N. 04120 La Ca\~{n}ada de San Urbano, Almer\'{\i}a, Spain } \email{misanche@ual.es}

\thanks{2010 Mathematics Subject Classification. Primary: 28A80; Secondary: 54E15.}
\keywords{Fractal, iterated function system, IFS-attractor, fractal structure, fractal dimension, box dimension, Haussdorff dimension.}

\bigskip

\begin{abstract}
This work is aimed by the spirit of 1946 Moran's Theorem, which ensures that both the box and the Hausdorff dimensions for any attractor could be calculated as the solution of an equation involving only its similarity factors. To achieve such  result, the open set condition (OSC, herein) is required to be satisfied by the pieces of the attractor in order to guarantee that they do not overlap too much.
In this paper, we generalize the classical Moran's through a fractal dimension model which allows to obtain the fractal dimension of any IFS-attractor equipped with its natural fractal structure without requiring the OSC to be satisfied.
\end{abstract}

\maketitle

\section{Introduction}
The calculation of the fractal dimension for the attractor of an iterated function system (IFS-attractor, herein) arises as an interesting question in Fractal Theory which could be dealt with from the point of view of fractal structures. In this way, the classical result proved by Moran in 1946 (see \cite[Theorem III]{MOR46}), allows to affirm that both the box-counting and the Hausdorff dimensions for any strict self-similar set can be fully determined through a straightforward expression only involving the corresponding similarity factors. However, to achieve such a result, the OSC is required to be satisfied by the similarities in that IFS. Recall that the open set condition is a significant restriction for the pieces of an attractor to ensure that they do not overlap too much. 

In this paper, some attempts to obtain a parallel result to Moran's by means of fractal structures are contributed. In fact, both Theorems \ref{teo:8} and \ref{teo:dim3} have been shown for fractal dimensions II and III, respectively. Thus, while the former assumes that all the similarities must have a common similarity factor under the OSC to reach the equality between the box dimension and the fractal dimension II, the latter does not require to satisfy the OSC by the similarities that define the corresponding IFS-attractor. Moreover, it also allows different similarity factors in order to calculate its fractal dimension III by means of an equation of the form $\sum c_i^s=1$. To end the paper, an additional Moran type result linking all the fractal dimension models for a fractal structure considered along this paper is contributed.

The structure of this work remains as follows. In Section \ref{sec:pre}, all the necessary preliminaries, involving the basics on IFS-attractors, fractal structures (and in particular, the natural fractal structure for an IFS-attractor), the classical models for fractal dimension (namely, both the box and the Hausdorff dimensions), the fractal dimension models for a fractal structure we explore in this paper for IFS-attractors, the OSC, as well as the classical Moran's Theorem, are provided in order to make this paper self-contained. Further, Section \ref{sec:3} contains our contributed Moran type theorems. 

\section{Preliminaries}\label{sec:pre}

Along this paper, let $I=\{1,\ldots,k\}$ be a finite index set. In this section, we provide all the necessary concepts, definitions and theoretical results, in order to make the present work self-contained. Accordingly, the structure of this preliminary section is as follows. In Subsection \ref{sub:ifs}, we recall the classical notion of an IFS, which becomes essential to show our main theorems in forthcoming Section \ref{sec:3}. Additionally, in Subsection \ref{sub:fs}, the concept of fractal structure is sketched from a topological point of view. It is worth mentioning that, also in such subsection, the notion of natural fractal structure which any IFS can be equipped with, is introduced. The next two subsections contains the basic definitions to calculate the fractal dimension for a given space. Thus, while Subsection \ref{sub:classics} recalls the classical models for fractal dimension (namely, both the box dimension and the Hausdorff dimension), Subsection \ref{sub:fdfs} contains some models of fractal dimension for a fractal structure that have been developed following the spirit of classical fractal dimensions. Finally, to end this section, we recall the definition of the OSC as well as the 1946 Moran's Theorem in upcoming Subsections \ref{sub:osc} and \ref{sub:moran}, respectively.

\subsection{IFS-attractors.}\label{sub:ifs}

First, let $f:X\to X$ be a self-map defined on a metric space $(X,\rho)$. Recall that $f$ is said to be a Lipschitz self-map whenever it satisfies that $\rho(f(x),f(y))\leq c\, \rho(x,y)$, for all $x,y\in X$, where $c>0$ is called the Lipschitz constant associated with $f$. In particular, if $c<1$, then $f$ is said to be a contraction, and we will refer to $c$ as its contraction factor.
Further, if the equality in the previous expression is reached, namely, $\rho(f(x),f(y))=c\, \rho(x,y)$, for all $x,y\in X$, then $f$ is called a similarity, and its Lipschitz constant is also called as its similarity factor.

\begin{defn}
For a metric space $(X,\rho)$, let us define an IFS as a finite family $\F=\{f_i: i\in I\}$, where $f_i$ is a similarity, for all $i\in I$. The unique compact set $A\subset X$ which satisfies that $A=\bigcup_{f\in\F}f(A)$, is called the attractor of the IFS $\F$, or IFS-attractor, as well. It is also called a self-similar set.
\end{defn}

It is a standard fact from Fractal Theory that there exists an atractor for any IFS on a complete metric space $X$.

Fig. \ref{fig:sierp} contains a graphical representation for the so-called Sierpi\'nski gasket, which was first defined in \cite{SIER15}. An analytical approach to this IFS-attractor is provided next.

\begin{ejem}\label{ejem:1}
Let $I=\{1,2,3\}$ be a finite index set, and let $\F=\{f_i:i\in I\}$ be a finite set of similarities defined from the Euclidean plane into itself as follows:
\[ f_{i}(x,y) = \left\{ \begin{array}{lll}
         (\frac{x}{2},\frac{y}{2}) & \mbox{if $i=1$}\\
         f_{1}(x,y)+(\frac{1}{2},0) & \mbox{if $i=2$}\\
         f_{1}(x,y)+(\frac{1}{4},\frac{1}{2}) & \mbox{if $i=3$,}
        \end{array} \right. \]
for all $(x,y)\in \mathbb{R}^2$. Thus, the Sierpi\'nski gasket is fully determined as the unique non-empty compact subset $K$ satisfying the Hutchinson's equation $K=\bigcup_{i\in I}f_i(K)$. Note that each of its pieces $f_i(K)$ becomes a self-similar copy of the whole Sierpi\'nski gasket.
\end{ejem}

\begin{center}
\begin{figure}[h]
\centering
\includegraphics[width=65mm,height=55mm]{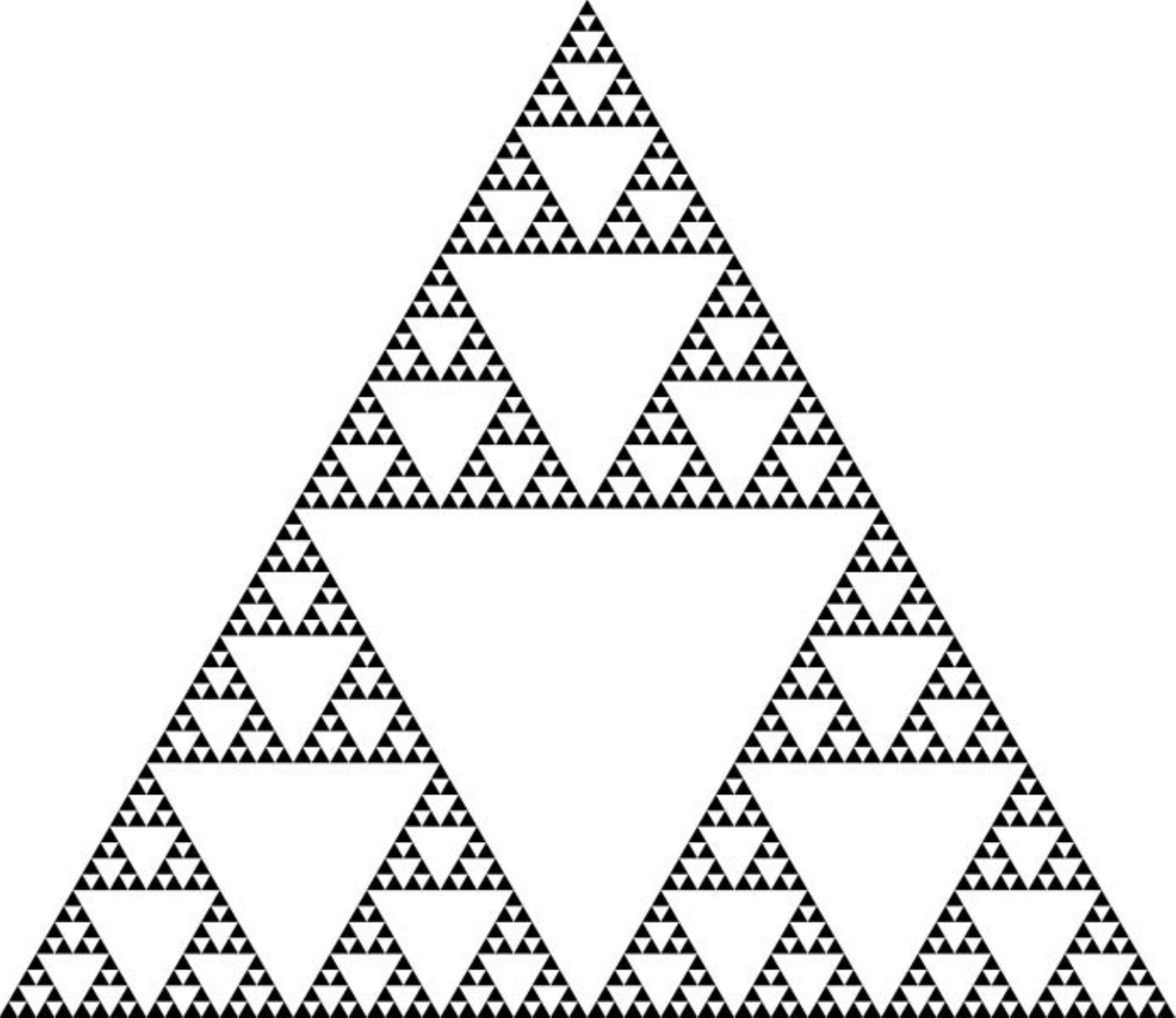}
\caption{Sierpi\'nski gasket.}\label{fig:sierp}
\end{figure}
\end{center}

\subsection{Fractal structures. The natural fractal structure for an IFS-attractor.}\label{sub:fs}

The concept of fractal structure, which naturally appears in several asymmetric topological topics \cite{SG10}, was first contributed in \cite{SG99A} to characterize non-Archimedeanly quasi-metrizable spaces. Moreover, in \cite{MSG12}, it was applied to deal with IFS-attractors.
On the other hand, fractal structures constitute a powerful tool to develop new fractal dimension models which allow to calculate the fractal dimension for a wide range of (non-Euclidean) spaces and contexts (see, e.g., \cite{MFM12}).

Recall that a family $\Gamma$ of subsets of a space $X$ is called a covering if $X=\bigcup\{A:A\in \Gamma\}$. 
A fractal structure is a countable collection of coverings of a given subset which provides better approximations to the whole space as deeper stages are reached, which we will refer to as \emph{levels} of the fractal structure.

Let $\Gamma_1$ and $\Gamma_2$ be any two coverings for $X$. Thus, $\Gamma_1\prec \Gamma_2$ means that $\Gamma_1$ is a \emph{refinement} of $\Gamma_2$, namely, for all $A\in \Gamma_1$, there exists $B\in \Gamma_2$ such that $A\subseteq B$. In addition to that, $\Gamma_1\prec \prec \Gamma_2$ denotes that $\Gamma_1\prec \Gamma_2$, and also, that for all $B\in \Gamma_2$, $B=\bigcup\{A\in \Gamma_1: A\subseteq B\}$. Hence, a fractal structure on a set $X$, is a countable family of coverings of $X$, $\ef=\{\Gamma_n:n\in \mathbb{N}\}$, such that $\Gamma_{n+1}\prec \prec \Gamma_n$, for all $n\in \mathbb{N}$. It is worth mentioning that covering $\Gamma_n$ is level $n$ of the fractal structure $\ef$. 

To simplify the theory, the levels of any fractal structure $\ef$ will not be coverings in the usual sense. Instead of this, we are going to allow that a set can appear more than once in any level of $\ef$. We would like also to point out that a fractal structure $\ef$ is said to be finite if all levels $\Gamma_n$ are finite coverings.


If $\ef$ is a fractal structure on $X$ and $\St(x,\ef)$ is a neighborhood base of $x$ for all $x\in X$, then we will call $\ef$ a starbase fractal structure. Starbase fractal structures are connected to metrizability (see, e.g., \cite{SG02A,SG02B}).
It is worth mentioning that a fractal structure $\ef$ is said to be finite if all levels $\Gamma_n$ are finite coverings. 

IFS-Attractors can be always equipped with a natural fractal structure, which was first sketched in \cite{BA92}, and formally defined later in \cite[Definition 4.4]{MSG12}. 
Next, we recall the description of such a fractal structure, which becomes essential for the upcoming section.

\begin{defn}\label{def:fsifs}
Let $\F$ be an IFS, whose associated IFS-attractor is $K$. The natural fractal structure on $K$ is given as the countable family of coverings $\ef=\{\Gamma_n: n\in \mathbb{N}\}$, where $\Gamma_n=\{f_{\omega}(K):\omega\in I^{n}\}$, for each $n\in \mathbb{N}$. Here, for a fixed natural number $n$ and each word $\omega=\omega_1 \ \omega_2 \ldots \ \omega_n\in I^{n}$, we denote $f_{\omega}=f_{\omega_1}\circ \ldots \circ f_{\omega_n}$.
\end{defn}

\begin{obs}
Another appropriate description of the levels of such a natural fractal structure is as follows: $\Gamma_1=\{f_i(K):i\in I\}$, and $\Gamma_{n+1}=\{f_i(A):A\in \Gamma_n,i\in I\}$, for all $n\in \mathbb{N}$.
\end{obs}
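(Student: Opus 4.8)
The plan is to argue by a simple induction on the level $n$, the substantive point being nothing more than the dictionary between finite words over $I$ and compositions of the similarities of $\F$; I would then add a short remark explaining why the recursively defined family is still a fractal structure in the sense recalled above.

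For the base case $n=1$ one only observes that $I^{1}$ is identified with $I$ and that $f_{i}=f_{\omega}$ for the length-one word $\omega=i$, so $\{f_{\omega}(K):\omega\in I^{1}\}=\{f_{i}(K):i\in I\}$, which is exactly the first clause of the recursive description. For the inductive step I would assume that the level $\Gamma_{n}$ furnished by the recursive rule already equals $\{f_{\omega}(K):\omega\in I^{n}\}$ of Definition \ref{def:fsifs}, and then apply the rule once more to get $\Gamma_{n+1}=\{f_{i}\bigl(f_{\omega}(K)\bigr):i\in I,\ \omega\in I^{n}\}$. The key (one-line) computation is that, writing $i\omega:=i\,\omega_{1}\cdots\omega_{n}\in I^{n+1}$ for the word obtained by prepending the letter $i$ to $\omega=\omega_{1}\cdots\omega_{n}$, associativity of composition gives
\[
f_{i}\circ f_{\omega}=f_{i}\circ f_{\omega_{1}}\circ\cdots\circ f_{\omega_{n}}=f_{i\omega},
\]
which is precisely the expression $f_{\tau}=f_{\tau_{1}}\circ\cdots\circ f_{\tau_{n+1}}$ of Definition \ref{def:fsifs} for $\tau=i\omega$. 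Since $(i,\omega)\mapsto i\omega$ is a bijection of $I\times I^{n}$ onto $I^{n+1}$, this yields $\Gamma_{n+1}=\{f_{\tau}(K):\tau\in I^{n+1}\}$ and closes the induction. (Because the paper's convention permits a set to occur several times within a level, I would phrase these as equalities of indexed families — both sides carrying $|I|^{n+1}$ entries — rather than of bare collections of sets.)

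Finally, to see that the recursive description is indeed \emph{appropriate}, i.e.\ that $\ef=\{\Gamma_{n}\}$ is a fractal structure, I would apply the similarity $f_{\omega}$ to Hutchinson's equation $K=\bigcup_{i\in I}f_{i}(K)$, obtaining $f_{\omega}(K)=\bigcup_{i\in I}f_{\omega i}(K)$ with each $f_{\omega i}(K)\in\Gamma_{n+1}$ and $f_{\omega i}(K)\subseteq f_{\omega}(K)\in\Gamma_{n}$; this at once gives $\Gamma_{n+1}\prec\prec\Gamma_{n}$ for every $n$. I do not expect any real obstacle here: the whole argument is bookkeeping, and the only thing to keep straight is that prepending (respectively appending) a letter to a word corresponds to pre-composing (respectively post-composing) with the corresponding similarity — which is immediate from the definition of $f_{\omega}$.
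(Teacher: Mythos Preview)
Your argument is correct and is exactly the natural justification one would give; note, however, that in the paper this statement is recorded as a bare remark with no proof at all, so there is nothing to compare against beyond the fact that your induction on word length (using the bijection $(i,\omega)\mapsto i\omega$ from $I\times I^{n}$ onto $I^{n+1}$ and the identity $f_{i}\circ f_{\omega}=f_{i\omega}$) spells out precisely the bookkeeping the authors left implicit. Your added verification that $\Gamma_{n+1}\prec\prec\Gamma_{n}$ via Hutchinson's equation is likewise correct and goes beyond what the paper writes down.
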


In Example \ref{ejem:1}, we described analytically that IFS whose associated attractor is the Sierpi\'nski gasket. Next, we describe the natural fractal structure that can be defined on this strict self-similar set.

\begin{ejem}
The natural fractal structure on the Sierpi\'nski gasket could be defined as the countable family of coverings $\ef=\{\Gamma_n:n\in \mathbb{N}\}$, where $\Gamma_1$ is the union of three equilateral ``triangles''\ whose sides are equal to $1/2$, $\Gamma_2$ consists of the union of $3^2$ equilateral ``triangles''\ with sides equal to $1/2^{2}$, and, in general, $\Gamma_n$ is the union of $3^n$ equilateral ``triangles''\ whose sides are equal to $1/2^n$, for all $n\in \N$. For illustration purposes, next Fig. \ref{fig:2} is provided. This contains a graphical approach to the first two levels in this fractal structure. Note that this is a finite fractal structure which also becomes starbase.
\end{ejem}

\begin{figure}[h]
\begin{center}
\begin{tabular}{cc}
\\ \includegraphics[width=60mm,height=46mm]{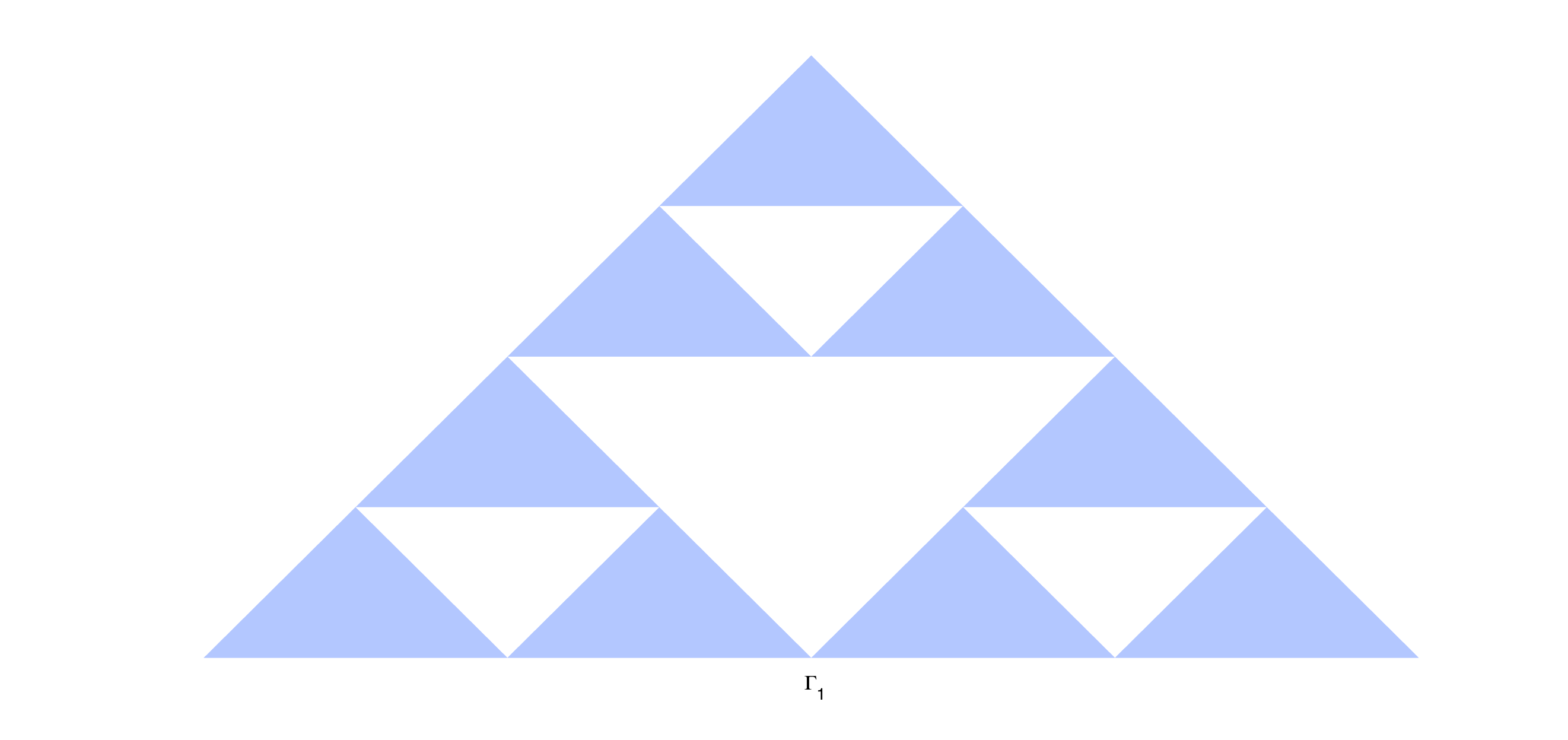} & \includegraphics[width=60mm,height=46mm]{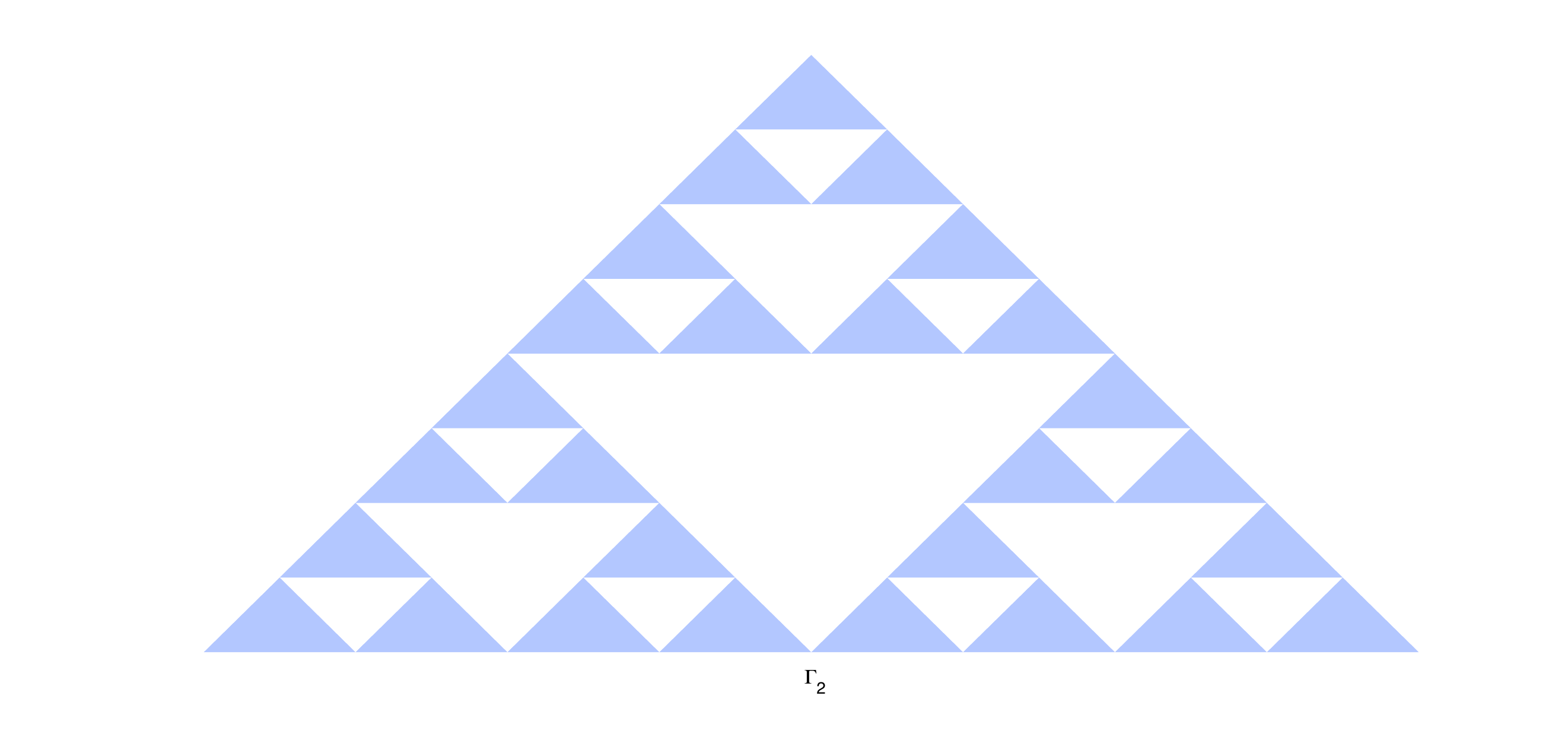}
\end{tabular}
\caption{First two levels for the natural fractal structure defined on the Sierpi\'nski gasket as an IFS-attractor.}\label{fig:2}
\end{center}
\end{figure}

\subsection{Classical models for fractal dimension.}\label{sub:classics}

Fractal dimension consists of a single quantity which yields valuable information about the complexity that a given space presents whether it is explored with enough level of detail. 

Next, we recall the definition of the standard box dimension, which is mainly used in empirical applications of fractal dimension due to the easiness of its empirical estimation.

\begin{defn}\label{def:bc}
The (lower/upper) box dimension for any subset $F\subseteq \mathbb{R}^d$ is given as the following (lower/upper) limit:
\begin{equation*}\label{eq:bc}
\bc(F)=\lim_{\delta\rightarrow 0}\frac{\log N_{\delta}(F)}{-\log \delta},
\end{equation*}
where $\delta$ is the scale, and $N_{\delta}(F)$ is
the largest number of disjoint balls of radii $\delta$ having centres in $F$.\label{eq:bc5}
\end{defn}
In 1919, Hausdorff used a method developed by Carath\'eodory some years earlier \cite{CAR14} in order to define the measures that now bear his name, and showed that the middle third Cantor set has positive and finite measure of dimension equal to $\log 2/ \log 3$ \cite{HAU19}. A detailed study about the analytical properties of both Hausdorff measure and dimension was mainly developed by Besicovitch and his pupils during the XXth century.


Next, let us recall the analytical construction of the Hausdorff dimension. Thus, let $(X,\rho)$ be a metric space, and let $\delta$ be a positive real number. For any subset $F$ of $X$, recall that a $\delta$-cover of $F$ is just a countable family of subsets $\{U_j\}_{j\in J}$ such that $F\subseteq \bigcup_{j\in J}U_j$, where $\dm(U_j)\leq \delta$, for all $j\in J$. Hence, let us denote by $\mathcal{C_\delta}(F)$ the collection of all $\delta$-covers of $F$. 
Moreover, let us consider the following quantity:
\begin{equation*}\label{eq:hdelta}
\mathcal{H}_\delta^s(F)=\inf\Bigg\{\sum_{j\in J}\dm(U_j)^s:\{U_j\}_{j\in J}\in \mathcal{C}_\delta(F)\Bigg\}.
\end{equation*}
We would like also to point out that the next limit always exists:
\begin{equation*}\label{eq:hm}
\mathcal{H}_H^s(F)=\lim_{\delta\rightarrow 0}\mathcal{H}_\delta^s(F),
\end{equation*}
which is called the $s$-dimensional Hausdorff measure of $F$. 
Hence, the Hausdorff dimension of $F$ is fully determined as the point $s$ where $\mathcal{H}_H^s(F)$ ``jumps" from $\infty$ to $0$, namely,
\begin{equation*}\label{eq:hreach}
\dih(F)=\inf\{s:\mathcal{H}_H^s(F)=0\}=\sup\{s:\mathcal{H}_H^s(F)=\infty\}.
\end{equation*}

\subsection{Fractal dimensions for a fractal structure.}\label{sub:fdfs}

In this subsection, we provide a theoretical description for all the fractal dimension models for a fractal structure that are applied in this paper to calculate the fractal dimension for IFS-attractors with respect to their natural structure. As we point out next, there are, basically, two kind of definitions: those which are similar to the classical box dimension, and on the other hand, those ones which become closer to the Hausdorff dimension. We encourage the reader to consult the following references for a deeper study regarding their theoretical properties: \cite{DIM1,DIM3,DIM4}.



\begin{defn}[Box dimension type models for a fractal structure]\label{}
Let $\ef$ be a fractal structure on a distance space $(X,\rho)$, $F$ be a subset of $X$, and $N_{n}(F)$ be the number of elements of $\Gamma_{n}$ which intersect $F$. Hence,
\begin{enumerate}[(1)]
\item the (lower/upper) fractal dimension I for $F$ is defined as the (lower/upper) limit:
\begin{equation*}
\uno(F)=\lim_{n\rightarrow \infty}\frac{\log N_{n}(F)}{n\log 2}.
\end{equation*}
\item the (lower/upper) fractal dimension II for $F$ is defined as the (lower/upper) limit:
\begin{equation*}
\dos(F)=\lim_{n\rightarrow \infty}\frac{\log N_{n}(F)}{-\log \delta(F,\Gamma_{n})}.
\end{equation*}
where $\delta(F,\Gamma_{n})=\sup\{\dm(A):A\in \Gamma_{n},A\cap F\neq \emptyset\}$ is the diameter of $F$ in level $n$ of the fractal structure.
\end{enumerate}
\end{defn}

\begin{defn}[Hausdorff dimension type models for a fractal structure]\label{}
Let $\ef$ be a fractal structure on a metric space $(X,\rho)$, $F$ be a subset of $X$, and assume that $\delta(F,\Gamma_n)\to 0$, and let us consider the following expressions:
\begin{enumerate}[(1)]
\item Given $n\in \N$: \label{eq:hnk}
$$\mathcal{H}_{n,k}^{s}(F)=\inf\Bigg\{\sum_{j\in J}\dm(A_j)^s:\{A_j\}_{j\in J}\in \mathcal{A}_{n,k}(F)\Bigg\},$$
where the family $\mathcal{A}_{n,k}(F)$ is given, in each case, as follows:
\begin{enumerate}[(i)]
\item $ \{\{A \in \Gamma_l: A \cap F \not=\emptyset \}: l \geq n \}$, \hfill if $k=3$; 
\item $\{\{A_{j}\}_{j\in J}:A_{j}\in \bigcup_{l\geq n}\Gamma_{l},\forall\, j\in J, F\subseteq \bigcup_{j\in J}A_{j}, |J|<\infty\}$,\hfill if $k=4$;
\item $\{\{A_{j}\}_{j\in J}:A_{j}\in \bigcup_{l\geq n}\Gamma_{l}, \forall\, j\in J, F\subseteq \bigcup_{j\in J}A_{j} \}$,\hfill if $k=5$,
\end{enumerate}     
where $|\cdot|$ denotes the number of elements in the corresponding set.   
Define also:
$$\mathcal{H}_k^s(F)=\lim_{n\rightarrow \infty}\mathcal{H}_{n,k}^{s}(F),$$
for $k\in \{3,4,5\}$, then the fractal dimension III (resp. IV, V) of $F$ is defined as the non-negative real such that
$$\dim_{\ef}^{k}(F)=\inf\{s:\mathcal{H}_k^s(F)=0\}=\sup\{s:\mathcal{H}_k^s(F)=\infty\}.$$
\item Consider the following expression:
$$\mathcal{H}_{\delta,6}^{s}(F)=\inf\Bigg\{\sum_{j\in J}\dm(A_j)^s:\{A_j\}_{j\in J}\in \mathcal{A}_{\delta,6}(F)\Bigg\},$$
where for each $\delta>0$, the family $\mathcal{A}_{\delta,6}(F)$ is defined by
$$\mathcal{A}_{\delta,6}(F)=\Bigg\{\{A_j\}_{j\in J}:A_j\in \bigcup_{l\in \mathbb{N}}\Gamma_l\ \forall \, j\in J, \dm(A_j)\leq \delta, F\subseteq \bigcup_{j\in J}A_j\Bigg\}.$$     
Thus, provided that
$$\mathcal{H}_6^s(F)=\lim_{\delta\rightarrow 0}\mathcal{H}_{\delta,6}^{s}(F),$$
then the fractal dimension VI of $F$ is defined as
$$\seis(F)=\inf\{s:\mathcal{H}_6^s(F)=0\}=\sup\{s:\mathcal{H}_6^s(F)=\infty\}.$$
\end{enumerate}
\item 
\end{defn}

\subsection{About the OSC}\label{sub:osc}

The OSC is a hypothesis required to the similarities $f_i$ of an Euclidean IFS $\F$ in order to guarantee that the pieces $f_i(K)$ of an IFS-attractor $K$ do not overlap \emph{too much}. Technically, such a condition is satisfied if and only if there exists a non-empty bounded open subset $V\subset \mathbb{R}^d$, such that $\bigcup_{i\in I}f_i(V)\subset V$, where that union remains disjoint (see, e.g., \cite[Section 9.2]{FAL90}).  

\subsection{Regarding the classical Moran's Theorem}\label{sub:moran}
In \cite[Theorem III]{MOR46} (or see \cite[Theorem 9.3]{FAL90}), it was provided a quite interesting result which allows the calculation of the box dimension of a certain class of Euclidean self-similar sets through the solution of an easy equation involving only a finite number of quantities, namely, the similarity factors that give rise to its corresponding IFS-attractor. Such a classical result is described next.


\begin{namedtheorem}[Moran](1946)
Let $\F$ be an Euclidean IFS satisfying the OSC and whose associated IFS-attractor is $K$. Let us suppose that $c_i$ is the similarity factor associated with each similarity $f_i$.
Then $\h(K)=\bc(K)=s$, where $s$ is given by
\begin{equation}\label{eq:fal}
\sum_{i=1}^{k}c_{i}^{s}=1.
\end{equation}
Further, for this value of $s$, it is verified that $\mathcal{H}_{H}^{s}(K)\in (0,\infty)$.
\end{namedtheorem}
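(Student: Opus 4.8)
The plan is the classical three-step argument. First I would check that $s$ is well defined and unique: $\varphi(t)=\sum_{i=1}^{k}c_i^{\,t}$ is continuous and strictly decreasing on $[0,\infty)$ since each $c_i\in(0,1)$, with $\varphi(0)=k\geq 1$ and $\varphi(t)\to 0$ as $t\to\infty$, so $\varphi(s)=1$ has exactly one solution $s\geq 0$. For a word $\omega=\omega_1\ldots\omega_n\in I^{n}$ write $c_\omega=c_{\omega_1}\cdots c_{\omega_n}$, so that $f_\omega$ is a similarity with factor $c_\omega$ and $\dm(f_\omega(K))=c_\omega\,\dm(K)$. The algebraic fact used throughout is that $\sum_{\omega\in\Lambda}c_\omega^{\,s}=1$ for every finite antichain (``cut set'') $\Lambda\subseteq\bigcup_{n}I^{n}$ meeting every infinite word, and also $K=\bigcup_{\omega\in\Lambda}f_\omega(K)$ for such $\Lambda$; the first follows by induction from $\sum_{i\in I}c_i^{\,s}=1$, replacing a word of $\Lambda$ by its $k$ one-letter extensions.

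For the upper bounds --- which do not use the OSC --- I would note that $K=\bigcup_{\omega\in I^{n}}f_\omega(K)$ with $\dm(f_\omega(K))\leq(\max_{i}c_i)^{n}\dm(K)\to 0$, so $\{f_\omega(K):\omega\in I^{n}\}$ is a $\delta$-cover of $K$ for $n$ large, and $\sum_{\omega\in I^{n}}\dm(f_\omega(K))^{s}=\dm(K)^{s}$; hence $\mathcal{H}_{H}^{s}(K)\leq\dm(K)^{s}<\infty$ and $\h(K)\leq s$. For the box dimension, given small $\delta>0$ take the cut set $\Lambda_\delta$ of words $\omega$ with $c_\omega\leq\delta<c_{\omega'}$, where $\omega'$ is the parent of $\omega$; then $c_\omega>(\min_{i}c_i)\,\delta$, so $|\Lambda_\delta|<(\min_{i}c_i)^{-s}\,\delta^{-s}\sum_{\omega\in\Lambda_\delta}c_\omega^{\,s}=(\min_{i}c_i)^{-s}\,\delta^{-s}$, while each $f_\omega(K)$ with $\omega\in\Lambda_\delta$ has diameter $\leq\delta\,\dm(K)$ and so contains at most a bounded number (depending only on $d$ and $\dm(K)$) of centres of any pairwise $2\delta$-separated set; hence $N_\delta(K)\lesssim\delta^{-s}$ and $\bc(K)\leq s$. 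Combining with the general inequalities $\h(K)\leq\underline{\dim}_{B}(K)\leq\overline{\dim}_{B}(K)$, everything is reduced to proving $\h(K)\geq s$.

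For the lower bound I would invoke the mass distribution principle: it suffices to produce a Borel probability measure $\mu$ with $\mathrm{supp}\,\mu\subseteq K$ and a constant $C>0$ such that $\mu(B(x,r))\leq C\,r^{s}$ for every $x\in\mathbb{R}^{d}$ and every small $r>0$, for then $\mathcal{H}_{H}^{s}(K)\geq 1/C>0$. I take $\mu$ to be the self-similar measure attached to the probability vector $p_i=c_i^{\,s}$ (which sum to $1$), i.e.\ the unique Borel probability measure with $\mu=\sum_{i\in I}p_i\,(f_i)_{*}\mu$; iterating over a cut set $\Lambda$ gives $\mu=\sum_{\omega\in\Lambda}p_\omega\,(f_\omega)_{*}\mu$ with $p_\omega=c_\omega^{\,s}$. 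Fixing $x,r$ and using the cut set $\Lambda_r=\{\omega:c_\omega\leq r<c_{\omega'}\}$, and noting that $(f_\omega)_{*}\mu$ is supported on $f_\omega(K)$, one gets
\[
\mu(B(x,r))\leq\sum_{\omega\in\Lambda_r,\ f_\omega(K)\cap B(x,r)\neq\emptyset}p_\omega\leq r^{s}\cdot\#\{\omega\in\Lambda_r:f_\omega(K)\cap B(x,r)\neq\emptyset\},
\]
so the whole problem reduces to bounding that cardinality by a constant. This is precisely where the OSC is needed: let $V$ be the bounded non-empty open set with $\bigcup_{i}f_i(V)\subseteq V$ and this union disjoint; then $K\subseteq\overline V$ (because the Hutchinson operator $F(E)=\bigcup_{i\in I}f_i(E)$ maps $\overline V$ into itself and $\bigcap_{n}F^{n}(\overline V)=K$), the sets $\{f_\omega(V):\omega\in\Lambda_r\}$ are pairwise disjoint, each $f_\omega(V)$ contains a ball of radius $\geq(\min_{i}c_i)\,\rho_1\,r$ (where $B(y_0,\rho_1)\subseteq V$) and, whenever $f_\omega(K)$ meets $B(x,r)$, satisfies $f_\omega(V)\subseteq B(x,(1+\dm V)\,r)$; a volume comparison in $\mathbb{R}^{d}$ then bounds the number of such $\omega$ by $q:=\big((1+\dm V)/((\min_{i}c_i)\rho_1)\big)^{d}$, a constant independent of $x$ and $r$. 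Hence $\mu(B(x,r))\leq q\,r^{s}$ and $\h(K)\geq s$.

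I expect this last packing estimate to be the main obstacle: without a separation hypothesis it is simply false that only boundedly many pieces $f_\omega(K)$ of comparable size can meet a ball of that size, and the OSC is exactly the device that supplies the pairwise disjoint auxiliary sets $f_\omega(V)$ on which the volume argument runs. Once the constant $q$ is secured, the mass distribution principle gives $\mathcal{H}_{H}^{s}(K)>0$, which together with $\mathcal{H}_{H}^{s}(K)<\infty$ from the second step yields $\mathcal{H}_{H}^{s}(K)\in(0,\infty)$, and together with the upper bounds gives $\h(K)=\bc(K)=s$.
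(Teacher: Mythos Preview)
The paper does not contain its own proof of Moran's Theorem. The theorem appears in Subsection~\ref{sub:moran} only as a quoted classical result, with references to \cite[Theorem III]{MOR46} and \cite[Theorem 9.3]{FAL90}; the paper then \emph{uses} it as an ingredient in the proofs of Theorem~\ref{teo:8}, Corollary~\ref{cor:dim3}, and the theorem in Subsection~\ref{teo:3}. So there is nothing in the paper to compare your argument against.

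That said, your proposal is correct and is essentially the standard proof as presented in \cite[Theorem 9.3]{FAL90}: upper bounds from the natural covers $\{f_\omega(K):\omega\in I^n\}$ together with the identity $\sum_{\omega}c_\omega^{\,s}=1$, and the lower bound via the mass distribution principle applied to the self-similar measure with weights $p_i=c_i^{\,s}$, using the OSC to run a volume-packing count on the disjoint images $f_\omega(V)$ over a cut set $\Lambda_r$. One small point worth tightening in an actual write-up: the containment $K\subseteq\overline V$ (needed so that $f_\omega(K)\subseteq f_\omega(\overline V)$) follows from $F(\overline V)\subseteq\overline V$ and compactness, as you say; and the disjointness of $\{f_\omega(V):\omega\in\Lambda_r\}$ uses that $\Lambda_r$ is an antichain, so any two distinct words in it are incomparable and the corresponding images of $V$ are separated already at the level of their common prefix. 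Both points are routine but are the places a careful reader would pause.
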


It is worth mentioning that Moran also provided in \cite[Theorem II]{MOR46} a weaker version for the result above under the assumption that all the similarities are equal.

\section{Fractal dimension for self-similar sets}\label{sec:3}

The main goal in this section is to provide some Moran type results from the point of view of fractal structures. To deal with, firstly, in  Subsection \ref{teo:1}, we provide a weaker version for Moran's (also similar to \cite[Theorem II]{MOR46}), since under the OSC, this allows to calculate the fractal dimension of any IFS-attractor provided that all the similarities have a common similarity factor. Moreover, Subsection \ref{teo:2} contains a generalization for Moran's Theorem, since this will allow to calculate the fractal dimension for any IFS-attractor (equipped with its natural fractal structure) without requiring the OSC to be satisfied. Finally, Subsection \ref{teo:3} contains an additional result linking all the fractal structure based models to calculate the fractal dimension for any IFS-attractor which have been considered along this paper.

\subsection{A weaker version for Moran's Theorem}\label{teo:1}

It is well known that under the OSC, both the box and the Hausdorff dimensions for any IFS-attractor do agree. Even more, this value can be easily calculated from Eq. (\ref{eq:fal}). In this way, it would be an interesting task to show whether the box dimension and the fractal dimension II for any self-similar set under the OSC match. Moreover, if it occurs, the calculation of such quantity would become immediate from both the number of similarities as well as their common similarity factor. Notice also that the next theoretical result, which becomes a weaker version of Moran's Theorem, also provides an easy way to calculate the box dimension for any Euclidean IFS-attractor under the OSC.

\begin{teo}\label{teo:8}
Let $\F$ be an Euclidean IFS satisfying the OSC and whose associated IFS-attractor is $K$. Let $c\in (0,1)$ be the common similarity factor associated with each similarity $f_{i}$. Moreover, let $\ef$ be the natural fractal structure on $K$ as a self-similar set. Then,
\begin{equation}\label{eq:13}
\bc(K)=\dos(K)=\frac{-\log k}{\log c}.
\end{equation}
\end{teo}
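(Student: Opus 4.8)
The plan is to compute both sides of Eq. (\ref{eq:13}) directly from the natural fractal structure $\ef = \{\Gamma_n\}$ on $K$ and then show they coincide with $-\log k/\log c$. The key observation is that, since all $k$ similarities share the common factor $c$, every piece $f_\omega(K)$ with $\omega \in I^n$ is a scaled copy of $K$ by the factor $c^n$; hence $\dm(f_\omega(K)) = c^n \dm(K)$ for every word $\omega$ of length $n$. In particular $\delta(K,\Gamma_n) = \sup\{\dm(A): A \in \Gamma_n, A \cap K \neq \emptyset\} = c^n \dm(K)$, because every element of $\Gamma_n$ meets $K$ (indeed is contained in $K$). This already pins down the denominator in $\dos(K)$ up to the additive constant $\log \dm(K)$, which will wash out in the limit.

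Next I would estimate the numerator $N_n(K)$, the number of elements of $\Gamma_n$ that meet $K$. Since $\Gamma_n = \{f_\omega(K): \omega \in I^n\}$ and each such piece is a subset of $K$, every one of these (at most) $k^n$ sets intersects $K$; allowing repetitions in the levels of a fractal structure as the paper does, we get exactly $N_n(K) = k^n$. Therefore
\begin{equation*}
\dos(K) = \lim_{n\to\infty} \frac{\log N_n(K)}{-\log \delta(K,\Gamma_n)} = \lim_{n\to\infty} \frac{n\log k}{-\log(c^n \dm(K))} = \lim_{n\to\infty} \frac{n\log k}{-n\log c - \log \dm(K)} = \frac{\log k}{-\log c},
\end{equation*}
which is the claimed value, and in particular the lower and upper versions agree.

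It remains to identify $\bc(K)$ with the same quantity. Here I would invoke the OSC: under the OSC, Moran's Theorem (stated in Subsection \ref{sub:moran}) gives $\bc(K) = s$ where $\sum_{i=1}^k c_i^s = 1$; with $c_i = c$ for all $i$ this reads $k\,c^s = 1$, i.e. $s = -\log k/\log c$. This is the quickest route and uses only the result already quoted in the excerpt. (Alternatively, one could estimate $N_\delta(K)$ directly by comparing the $\delta$-scale to the natural scale $c^n\dm(K)$ via an appropriate $n = n(\delta)$, using the OSC to get a uniform lower bound on how many pieces a ball of radius $\delta$ can meet; but leaning on Moran's Theorem avoids reproving that bounded-overlap estimate.) Combining the two computations yields $\bc(K) = \dos(K) = -\log k/\log c$, as desired.

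The main obstacle, and the only place real care is needed, is the numerator count $N_n(K) = k^n$: one must be explicit that in this paper's convention a set is allowed to appear multiple times in a level $\Gamma_n$, so that distinct words $\omega \neq \omega'$ contribute distinct elements even when $f_\omega(K) = f_{\omega'}(K)$ as sets. Were we to count $N_n(K)$ as the cardinality of the set $\{f_\omega(K): \omega \in I^n\}$, heavy overlaps could in principle reduce it — but the OSC rules out collapse of the count below a constant multiple of $k^n$, so the limit is unaffected in any case. Everything else is the routine limit computation carried out above.
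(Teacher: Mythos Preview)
Your argument is correct, but it follows a different route from the paper's. You compute $\dos(K)$ directly: with the multiset convention for levels you get $N_n(K)=k^n$ and $\delta(K,\Gamma_n)=c^n\dm(K)$, so the limit is $-\log k/\log c$ by a one-line calculation; the OSC enters only when you invoke Moran's Theorem for $\bc(K)$. The paper instead never computes $\dos(K)$ directly. It sandwiches it between the lower and upper box dimensions: the inequality $\underline{\dim}_B(K)\le\underline{\dim}_\ef^2(K)$ is imported from \cite[Corollary~4.12(1)]{DIM1}, while $\overline{\dim}_\ef^2(K)\le\overline{\dim}_B(K)$ is obtained by producing, via \cite[Lemma~4.18]{DIM1} and the OSC, $k^n$ disjoint balls of radius $c^n\varepsilon$ centered in $K$, giving $N_n(K)\le N_{\varepsilon_n}(K)$. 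The existence of $\bc(K)$ (from Moran) then forces $\dos(K)=\bc(K)$, and the value again comes from Moran. Your approach is more elementary and self-contained (no external lemmas beyond Moran), and it incidentally shows that the identity $\dos(K)=-\log k/\log c$ holds without any separation hypothesis; the paper's sandwich argument, by contrast, establishes the structural comparison $\underline{\dim}_B\le\underline{\dim}_\ef^2\le\overline{\dim}_\ef^2\le\overline{\dim}_B$ along the way, at the cost of citing outside results and using the OSC in both halves of the proof.
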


\begin{proof}
First, to calculate the box dimension of $K$, let $N_{\delta}(K)$ be as in Definition \ref{def:bc}, and let us define $\delta_{n}=\delta(K,\Gamma_{n})$. Note that $\delta_{n}=c^{n}\, \dm(K)$, since $K$ is a strict self-similar set. By \cite[Lemma 4.18]{DIM1}, there are so many disjoint balls centered in $K$ and having radii $\varepsilon_{n}=c^{n}\, \varepsilon$, where $\varepsilon>0$, as the number of elements in $I^{n}$. On the other hand, since $N_{\varepsilon_{n}}(K)$ is the largest number of such balls, it becomes clear that the number of elements in $I^{n}$ is at most $N_{\varepsilon_{n}}(K)$. Thus, 
$N_{n}(K)\leq N_{\varepsilon_{n}}(K)$,
for all $n\in \mathbb{N}$. Moreover, we affirm that there exists $\gamma>0$ such that $\delta(K,\Gamma_{n})=\gamma\, \varepsilon_{n}$. In fact, this is true for $\gamma=\dm(K)/\varepsilon$. Overall, we can state that
\begin{equation}\label{eq:12}
\overline{\dim}_{\ef}^{2}(K)\leq\overline{\lim}_{n\rightarrow \infty}\frac{\log N_{\varepsilon_{n}}(K)}{-\log \varepsilon_{n}}=\overline{\dim}_{B}(K).
\end{equation}
Accordingly, the following chain of inequalities holds:
$$\underline{\dim}_{B}(K)\leq \underline{\dim}_{\ef}^{2}(K)\leq \overline{\dim}_{\ef}^{2}(K)\leq \overline{\dim}_{B}(K),$$
where the first inequality is due to \cite[Corollary 4.12 (1)]{DIM1}, and the last one is ensured by Eq. (\ref{eq:12}). Further, the existence of the box dimension for $K$ implies the existence of the fractal dimension II for $K$, and hence, we get the desired equality: $\bc(K)=\dos(K)$. Finally, just apply the classical Moran's Theorem to get the last equality in Eq. (\ref{eq:13}). Indeed, the expression
$\sum_{i\in I}c_{i}^{s}=k\, c^{s}=1$
leads to $s=-\log k/\log c$.
\end{proof}

\subsection{Generalizing the classical Moran's Theorem}\label{teo:2}

The main goal in this subsection is to show that the fractal dimension for any IFS-attractor could be calculated by means of an Eq. (\ref{eq:fal}) type, involving only a finite number of known quantities, even if the corresponding similarities do not satisfy the OSC. Thus, this kind of result generalizes the classical Moran's Theorem in the context of fractal structures. The proof for such a theoretical result is based on the natural fractal structure  which each IFS-attractor can be equipped with, and also takes into account Definition (\ref{eq:hnk})(i) for $\mathcal{H}_{n,3}^s$.

\begin{teo}\label{teo:dim3}
Let $\F$ be an IFS on a complete metric space $X$, whose associated IFS-attractor is $K$. Let us suppose that $c_i$ is the similarity factor associated with each similarity $f_i$, and let $\ef$ be the natural fractal structure on $K$ as a self-similar set. Then, $\tres(K)=s$, where $s$ is given as the solution of the following equation: 
$$\sum_{i\in I}c_i^s=1.$$
Moreover, for that value of $s$, it is satisfied that $\mathcal{H}_3^s(K)\in (0,\infty)$.
\end{teo}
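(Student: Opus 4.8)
The plan is to compute $\mathcal{H}_3^s(K)$ directly using the natural fractal structure, for the value $s$ satisfying $\sum_{i\in I}c_i^s=1$, and to show that $\mathcal{H}_3^s(K)\in(0,\infty)$; once this is established, the characterization $\tres(K)=\inf\{t:\mathcal{H}_3^t(K)=0\}=\sup\{t:\mathcal{H}_3^t(K)=\infty\}$ forces $\tres(K)=s$, since $\mathcal{H}_3^t(K)$ is non-increasing in $t$ in the appropriate sense and finite-positive exactly at the critical exponent. First I would record the key structural fact about the levels of $\ef$: for $\omega\in I^n$, the piece $f_\omega(K)$ is a similar copy of $K$ with ratio $c_\omega:=\prod_{j=1}^n c_{\omega_j}$, hence $\dm(f_\omega(K))=c_\omega\,\dm(K)$. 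Recall that by Definition (\ref{eq:hnk})(i), $\mathcal{H}_{n,3}^s(F)=\inf\{\sum \dm(A)^s : A\in\Gamma_l,\ A\cap F\neq\emptyset,\ l\geq n\}$, so for $F=K$ the only admissible families are the \emph{full} levels $\Gamma_l$ with $l\geq n$ (every element of $\Gamma_l$ meets $K$), and thus
\begin{equation*}
\mathcal{H}_{n,3}^s(K)=\inf_{l\geq n}\sum_{\omega\in I^l}\dm(f_\omega(K))^s=\inf_{l\geq n}\dm(K)^s\sum_{\omega\in I^l}c_\omega^s.
\end{equation*}

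The arithmetic heart of the argument is that $\sum_{\omega\in I^l}c_\omega^s=\big(\sum_{i\in I}c_i^s\big)^l$, which follows by factoring the sum over words of length $l$ as an $l$-fold product; hence if $s$ is chosen so that $\sum_{i\in I}c_i^s=1$, then $\sum_{\omega\in I^l}c_\omega^s=1$ for \emph{every} $l$, so $\mathcal{H}_{n,3}^s(K)=\dm(K)^s$ for all $n$, and therefore $\mathcal{H}_3^s(K)=\lim_{n\to\infty}\mathcal{H}_{n,3}^s(K)=\dm(K)^s$. Since $K$ is a non-empty compact set it has $0<\dm(K)<\infty$ (assuming, as is implicit, that $K$ is not a single point — I would note that the degenerate case is trivial), so $\mathcal{H}_3^s(K)=\dm(K)^s\in(0,\infty)$. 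This is exactly the claimed finiteness and positivity. Finally, to conclude $\tres(K)=s$ I would invoke monotonicity: for $t>s$ one has $\sum_{i\in I}c_i^t<\sum_{i\in I}c_i^s=1$, so $\sum_{\omega\in I^l}c_\omega^t=(\sum_i c_i^t)^l\to 0$, giving $\mathcal{H}_3^t(K)=0$; for $t<s$ the same sum diverges, giving $\mathcal{H}_3^t(K)=\infty$. Hence $s$ is precisely the jump point, i.e. $\tres(K)=s$.

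The existence and uniqueness of the solution $s$ to $\sum_{i\in I}c_i^s=1$ should be noted as a preliminary remark: the function $\varphi(t)=\sum_{i\in I}c_i^t$ is continuous and strictly decreasing on $[0,\infty)$ (each $c_i\in(0,1)$), with $\varphi(0)=k\geq 1$ and $\varphi(t)\to 0$ as $t\to\infty$, so there is a unique $s\geq 0$ with $\varphi(s)=1$ (and $s=0$ precisely when $k=1$, again a degenerate case one can dispatch separately). I expect the main subtlety — as opposed to obstacle — to be purely bookkeeping: making sure that in the definition of $\mathcal{H}_{n,3}^s(K)$ the infimum really does range only over complete levels $\Gamma_l$, $l\geq n$ (which is where the hypothesis $F=K$ is used decisively and where this proof departs from a Hausdorff-measure computation, since no genuine covering optimization or sub-level selection is available), and in checking that $\delta(K,\Gamma_n)=c_{\max}^n\,\dm(K)\to 0$ so that $\mathcal{H}_3^s$ is well-defined in the sense required by the definition, where $c_{\max}=\max_i c_i<1$. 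No appeal to the OSC is needed anywhere, which is the whole point of the generalization; the overlaps among the pieces $f_i(K)$ are harmless because the fractal-structure Hausdorff-type sum at level $l$ counts each word $\omega\in I^l$ once regardless of geometric overlap.
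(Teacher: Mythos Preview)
Your proof is correct and follows essentially the same route as the paper: identify $\mathcal{A}_{n,3}(K)$ with the full levels $\{\Gamma_l:l\geq n\}$, compute $\sum_{A\in\Gamma_l}\dm(A)^s=\dm(K)^s\big(\sum_{i\in I}c_i^s\big)^l=\dm(K)^s$ via the factorization over words, and conclude $\mathcal{H}_3^s(K)=\dm(K)^s\in(0,\infty)$. The extra details you supply (existence and uniqueness of $s$, the explicit monotonicity for $t\neq s$, the check that $\delta(K,\Gamma_n)\to 0$, and the degenerate cases) are not in the paper's argument but only sharpen it.
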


\begin{proof}
First, recall that $K$ is the unique non-empty compact subset of $X$ which verifies the Hutchinson equation $K=\bigcup_{i\in I}f_i(K)$. This allows us to affirm that $\mathcal{A}_{n,3}(K)=\{\Gamma_m:m\geq n\}$, for each $n\in \mathbb{N}$. On the other hand, let $s\geq 0$ be such that $\sum_{i\in I}c_i^s=1$, and let us also define $J_l=\{(i_1,\ldots,i_l):i_j\in I, 1\leq j\leq l\}$. Thus, if we denote $K_{i_1\ldots\, i_l}=f_{i_1}\circ \ldots \circ f_{i_l}(K)$, then we get $K=\bigcup_{J_l}K_{i_1\ldots\, i_l}$. Moreover, it becomes straightforward to show that $c_{i_1}\ldots\, c_{i_l}$ is the similarity factor associated with the composition of similarities $f_{i_1}\circ f_{i_2}\circ \ldots \circ f_{i_l}$, and hence $\dm(K_{i_1\ldots\, i_l})=c_{i_1} \cdots c_{i_l} \dm(K)$. Note also that $\sum_{(i_1, \ldots,i_l) \in J_l} c_{i_1}^s \cdots c_{i_l}^s=\sum_{i_1 \in I} c_{i_1}^s\, \ldots\, \sum_{i_m \in I} c_{i_l}^s=1$. Accordingly,
\begin{equation*}
\begin{split}
\mathcal{H}_{n,3}^s(K)
&=\inf\Bigg\{\sum_{A \in \Gamma_m}\dm(A)^s:m\geq n\Bigg\}\\
&=\inf\Bigg\{\sum_{(i_1,\ldots,i_m)\in J_m} \dm(K_{i_1\ldots\, i_m})^s: m\geq n\Bigg\}\\
&=\inf\Bigg\{\sum_{i_1 \in I} c_{i_1}^s\, \ldots\, \sum_{i_m \in I} c_{i_m}^s\, \dm(K)^s: m\geq n\Bigg\},
\end{split}
\end{equation*}
for each $n\in \N$. This implies that $\mathcal{H}_3^s(K)=\dm(K)^s$. Hence, $\mathcal{H}_3^s(K)\notin \{0,\infty\}$, so $s=\tres(K)$.
\end{proof}

It is worth mentioning that both Theorem \ref{teo:dim3} and Moran's Theorem lead to an interesting result whose proof becomes now immediate: under the OSC, both the box and the Hausdorff dimensions are equal to the fractal dimension III.

\begin{cor}\label{cor:dim3}
Let $\F$ be an Euclidean IFS satisfying the OSC and whose associated IFS-attractor is $K$. Moreover, let $\ef$ be the natural fractal structure on $K$ as a self-similar set. Then,
$\tres(K)=\bc(K)=\h(K)$.
\end{cor}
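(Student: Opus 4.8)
The plan is to derive Corollary \ref{cor:dim3} by simply combining the two Moran-type statements already in hand: the classical Moran's Theorem (stated in Subsection \ref{sub:moran}) and Theorem \ref{teo:dim3}. Both of them compute their respective dimensions as the unique solution $s$ of the very same Moran equation $\sum_{i\in I} c_i^s = 1$, so the bulk of the argument is an observation rather than a computation.

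First I would invoke the hypotheses: since $\F$ is an Euclidean IFS satisfying the OSC with IFS-attractor $K$ and similarity factors $c_i \in (0,1)$, the classical Moran's Theorem applies and yields $\h(K) = \bc(K) = s_0$, where $s_0$ is characterized by $\sum_{i\in I} c_i^{s_0} = 1$. Next, I would note that Theorem \ref{teo:dim3} requires only that $\F$ be an IFS on a complete metric space, which is satisfied here (an Euclidean IFS is an IFS on $\mathbb{R}^d$, a complete metric space), and that $K$ be equipped with its natural fractal structure $\ef$, which is exactly the hypothesis of the corollary. Hence Theorem \ref{teo:dim3} gives $\tres(K) = s_1$, where $s_1$ solves $\sum_{i\in I} c_i^{s_1} = 1$. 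Finally, since the function $s \mapsto \sum_{i\in I} c_i^s$ is strictly decreasing and continuous on $[0,\infty)$ (each $c_i \in (0,1)$), with value $k \geq 1$ at $s=0$ and limit $0$ as $s\to\infty$, the equation $\sum_{i\in I} c_i^s = 1$ has a unique solution; therefore $s_0 = s_1$, and chaining the equalities gives $\tres(K) = \bc(K) = \h(K)$.

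There is essentially no obstacle here: the only thing worth spelling out carefully is the uniqueness of the Moran exponent, so that the two independently-obtained values are forced to coincide, and a brief check that the hypotheses of Theorem \ref{teo:dim3} are genuinely weaker than those of the classical Moran's Theorem (Euclidean plus OSC implies complete metric space, with no further assumption needed). If one wanted to avoid even mentioning uniqueness, one could instead phrase it as: the common value $s$ appearing in Moran's Theorem is the same symbol solving the same equation in Theorem \ref{teo:dim3}, whence the three dimensions agree. I would present the proof in two short sentences accordingly.

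\begin{dem}
Since $\F$ is an Euclidean IFS satisfying the OSC, the classical Moran's Theorem yields $\h(K)=\bc(K)=s$, where $s$ is the solution of $\sum_{i\in I}c_i^s=1$. As $\mathbb{R}^d$ is a complete metric space, Theorem \ref{teo:dim3} applies and gives $\tres(K)=s$ for the solution $s$ of the same equation $\sum_{i\in I}c_i^s=1$; since each $c_i\in(0,1)$, the map $s\mapsto \sum_{i\in I}c_i^s$ is strictly decreasing and continuous, so this solution is unique. Therefore $\tres(K)=\bc(K)=\h(K)$.
\end{dem}
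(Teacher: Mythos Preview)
Your proof is correct and follows exactly the approach the paper intends: the paper states that the corollary's proof ``becomes now immediate'' from Theorem \ref{teo:dim3} together with the classical Moran's Theorem, which is precisely what you do. Your added remark on the uniqueness of the Moran exponent is a welcome clarification that the paper leaves implicit.
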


\subsection{An additional Moran type theorem}\label{teo:3}

The following theorem we present next provides an additional Moran type theorem in the context of fractal structures. Indeed, it establishes that under the OSC, all the fractal dimension models involved in this paper are equal to both the Hausdorff and the box dimension, and they can be also explicitly calculated through an expression like Eq. (\ref{eq:fal}). Moreover, if all the similarity factors are equal, then additional connections within both fractal dimensions I \& II could be stated. It is worth mentioning that all the fractal dimensions for a fractal structure that appear in this result are determined with respect to the natural fractal structure associated with the corresponding IFS-attractor (recall Definition \ref{def:fsifs}).

\begin{teo}
Let $\F$ be an Euclidean IFS satisfying the OSC and whose associated IFS-attractor is $K$.  Let $c_i$ be the similarity factor associated with each similarity $f_i$ Further, let $\ef$ be the natural fractal structure on $K$ as a self-similar set. The two following hold:
\begin{enumerate}[(1)]
\item $\bc(K)=\tres(K)=\cuatro(K)=\cinco(K)=\seis(K)=\dih(K)=s$.
\item If all the similarity factors are equal, namely, $c_i=c\in (0,1)$, for each $i\in I$, then 
$\bc(K)=\dos(K)=\tres(K)=\cuatro(K)=\cinco(K)=\seis(K)=\dih(K)\leq \gamma_c\cdot \uno(K)$, where $\gamma_c=-\log 2/\log c$. 
\end{enumerate}
Moreover, $s$ is given by
\begin{equation}\label{eq:falnew}
\sum_{i\in I}c_i^s=1,
\end{equation}
and for that value of $s$, one gets that $\mathcal{H}_H^s(K),\mathcal{H}_m^s(K)\in (0,\infty)$, for $m=3,4,5,6$.
\end{teo}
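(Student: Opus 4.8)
The strategy is to establish the chain of equalities by combining three ingredients already available: the classical Moran Theorem (which gives $\bc(K)=\h(K)=s$ with $\sum_i c_i^s=1$ and $\mathcal{H}_H^s(K)\in(0,\infty)$), Theorem~\ref{teo:dim3} (which gives $\tres(K)=s$ and $\mathcal{H}_3^s(K)\in(0,\infty)$ with no OSC needed), and a set of sandwiching inequalities among the fractal-structure dimensions that hold for any fractal structure. The key observation is that the six Hausdorff-type covering families are nested: $\mathcal{A}_{n,3}(F)\subseteq \mathcal{A}_{n,5}(F)$ (a cover by the level sets that hit $F$ is in particular \emph{a} countable cover by elements of $\bigcup_{l\ge n}\Gamma_l$), $\mathcal{A}_{n,4}(F)$ sits between by adding the finiteness restriction, and $\mathcal{A}_{\delta,6}$ allows sets from \emph{all} levels with small diameter. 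Larger covering families give smaller infima, hence $\mathcal{H}_{n,5}^s\le \mathcal{H}_{n,4}^s\le \mathcal{H}_{n,3}^s$ and similarly $\mathcal{H}_{\delta,6}^s$ is dominated; passing to the limit yields $\cinco(K)\le\cuatro(K)\le\tres(K)$ and $\seis(K)\le\tres(K)$. For the reverse directions one compares with the ordinary Hausdorff dimension: since every element of $\bigcup_l\Gamma_l$ is an honest subset of $X$ with a genuine diameter, any $\mathcal{A}_{\delta,6}$-cover (resp.\ $\mathcal{A}_{n,k}$-cover for $k=4,5$ once $\delta(F,\Gamma_n)\to 0$) is an admissible $\delta$-cover in the sense of the classical Hausdorff construction, so $\mathcal{H}_\delta^s(F)\le \mathcal{H}_{\delta,6}^s(F)$ etc., giving $\dih(K)\le \seis(K)$, $\dih(K)\le\cinco(K)$, $\dih(K)\le\cuatro(K)$. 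Combined with Moran's $\h(K)=s=\tres(K)$, all of $\tres,\cuatro,\cinco,\seis,\dih,\bc$ are squeezed to $s$, proving~(1); the measure positivity/finiteness at $s$ follows the same way, squeezing each $\mathcal{H}_m^s(K)$ between the finite positive $\mathcal{H}_H^s(K)$ and the finite positive $\mathcal{H}_3^s(K)=\dm(K)^s$.

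For part~(2), assume $c_i=c$ for all $i$. Then $\sum_i c^s=1$ forces $s=-\log k/\log c$, and Theorem~\ref{teo:8} already gives $\bc(K)=\dos(K)=-\log k/\log c=s$, so $\dos(K)$ joins the chain of equalities. For the inequality with fractal dimension~I, recall that $\uno(K)=\lim_n \frac{\log N_n(K)}{n\log 2}$ while $\dos(K)=\lim_n\frac{\log N_n(K)}{-\log\delta(K,\Gamma_n)}$, and in the equal-ratio case $\delta(K,\Gamma_n)=c^n\dm(K)$, so $-\log\delta(K,\Gamma_n)=-n\log c-\log\dm(K)$. Dividing, $\dos(K)/\uno(K)=\lim_n \frac{n\log 2}{-n\log c-\log\dm(K)}=\frac{\log 2}{-\log c}=\gamma_c$ (here I use $N_n(K)=k^n\to\infty$ so neither limit is degenerate, and $\uno(K)=\log k/\log 2$). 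Hence $\dos(K)=\gamma_c\cdot\uno(K)$, and since all the other dimensions equal $\dos(K)$, we obtain $\bc(K)=\tres(K)=\cdots=\dih(K)=\gamma_c\cdot\uno(K)$, which in particular is $\le \gamma_c\cdot\uno(K)$ as claimed. (The ``$\le$'' rather than ``$=$'' in the statement is presumably stated for uniformity with the general non-equal case, but here equality in fact holds.)

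\textbf{Main obstacle.} The routine sandwiching is bookkeeping; the delicate point is verifying the \emph{lower} bounds $\dih(K)\le\dim_{\ef}^m(K)$ for $m=4,5,6$, i.e.\ that fractal-structure covers really are legitimate classical $\delta$-covers. This requires $\delta(K,\Gamma_n)\to 0$ (guaranteed here because $c<1$, so $\delta(K,\Gamma_n)=c^n\dm(K)\to 0$) and the elementary but essential fact that $\dm(A)$ for $A\in\Gamma_l$ coincides with its metric diameter as a subset of $X$ — which is immediate from $A=f_\omega(K)$ and $\dm(f_\omega(K))=c_{i_1}\cdots c_{i_l}\dm(K)$. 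Once one is careful that ``$\dm$'' means the same thing in both constructions, the estimate $\mathcal{H}_H^s(K)\le \mathcal{H}_m^s(K)$ drops out, and the whole theorem follows by transitivity of the inequalities together with Moran and Theorems~\ref{teo:8} and~\ref{teo:dim3}. A secondary subtlety worth a sentence in the writeup is the identification $\mathcal{A}_{n,3}(K)=\{\Gamma_m:m\ge n\}$ (already used in the proof of Theorem~\ref{teo:dim3}), which rests on the Hutchinson identity $K=\bigcup_{i\in I}f_i(K)$ forcing every level $\Gamma_m$ to be an exact cover of $K$ rather than merely a family of sets that happen to meet $K$.
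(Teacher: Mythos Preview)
Your proposal is correct and follows essentially the same route as the paper: sandwich the fractal-structure dimensions between $\dih(K)$ and $\tres(K)$ via the nesting of the cover families (which the paper obtains by citing \cite[Proposition~3.5(3)]{DIM4}, using that the natural fractal structure is finite), then close the sandwich with Moran's Theorem, Theorem~\ref{teo:dim3} and Corollary~\ref{cor:dim3}; the measure statement is the same chain $\mathcal{H}_H^s\le\mathcal{H}_6^s\le\cdots\le\mathcal{H}_3^s$ bounded below by Moran and above by Theorem~\ref{teo:dim3}. For part~(2) you actually obtain the equality $\dih(K)=\gamma_c\cdot\uno(K)$ by computing $\uno(K)=\log k/\log 2$ directly, whereas the paper only records the inequality via \cite[Corollary~3.10]{DIM1}; this is a harmless strengthening.
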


\begin{proof}
\begin{enumerate}[(1)]
\item First, note that \cite[Proposition 3.5 (3)]{DIM4} can be applied, since the natural fractal structure on $K$ as a self-similar set is finite. Thus, $\dih(K)\leq \seis(K)\leq \cinco(K)\leq \cuatro(K)\leq \tres(K)$. On the other hand, Theorem \ref{teo:dim3} leads to $\tres(K)=s$, where $s$ is the solution of Eq. (\ref{eq:falnew}). Finally, Corollary \ref{cor:dim3} gives the result, since it implies that $\tres(K)=\bc(K)=\dih(K)$.
\item Let us apply \cite[Proposition 3.5 (3)]{DIM4} to get $\dih(K)\leq \seis(K)\leq \cinco(K)\leq \cuatro(K)\leq \tres(K)\leq \dos(K)$. Furthermore, notice that Moran's Theorem allows to affirm that $\bc(K)=\dih(K)$. Finally, recall that Theorem \ref{teo:8} gives that $\dos(K)=\bc(K)$, since all the similarity factors are equal. Note also that \cite[Corollary 3.10]{DIM1} leads to $\bc(K)\leq \gamma_c\cdot \uno(K)$, where $\gamma_c=-\log 2/ \log c$.
\end{enumerate}
In addition to that, the following chain of inequalities is satisfied:
$$\mathcal{H}_H^s(K)\leq \mathcal{H}_6^s(K)\leq \mathcal{H}_5^s(K)\leq \mathcal{H}_4^s(K)\leq \mathcal{H}_3^s(K),$$
where $\mathcal{H}_H^s(K)>0$ (just applying Moran's Theorem), and $\mathcal{H}_3^s(K)<\infty$, due to Theorem \ref{teo:dim3}. 
\end{proof}

\section*{Acknowledgements}
The first author is specially grateful to
Centro Universitario de la Defensa en la Academia General del
Aire de San Javier (Murcia, Spain). The second author of this work was partially supported by
MICINN/ FEDER grant number MTM2011--22587, MINECO grant number MTM2014-51891-P and Fundaci\'{o}n S\'{e}neca de la Regi\'{o}n de Murcia grant number 19219/PI/14. The third author acknowledges the support of the Ministry of Economy and Competitiveness of Spain, Grant MTM2012-37894-C02-01.

\end{document}